\documentclass[11pt]{article}
\usepackage{fullpage}

\usepackage{amsmath}
\usepackage{amsfonts}
\usepackage{amsthm}
\usepackage{mathrsfs}
\usepackage{tikz-cd}
%\usepackage{setspace}
%\setstretch{2}
\usepackage{url}
\usepackage[font=small]{caption}
\usepackage{hyperref}
\usepackage[capitalise,nosort,nameinlink]{cleveref}

\usepackage{enumitem}
\setlist[enumerate,1]{label=\textup{(}\textrm{\arabic*}\textup{)},
                      ref  =\thetheorem.(\textrm{\arabic*})}

\usepackage{natbib}
\bibliographystyle{apalike}

\theoremstyle{definition}
\newtheorem{definition}{Definition}
\newtheorem{example}[definition]{Example}
\newtheorem{remark}[definition]{Remark}

\theoremstyle{plain}
\newtheorem{proposition}[definition]{Proposition}
\newtheorem{lemma}[definition]{Lemma}

\newtheorem{corollary}[definition]{Corollary}

\newcommand{\CC}{\mathscr{C}}
\newcommand{\II}{\mathrm{I}}
\newcommand{\pr}{\mathrm{pr}}

\DeclareMathOperator{\Hom}{Hom}

\begin{document}

\title{A short proof of the Frobenius property for generic fibrations}

\author{
  Reid Barton\\
  Carnegie Mellon University}

\date{November 19, 2024}

\maketitle

\begin{abstract}
  We give a simple diagrammatic proof
  of the Frobenius property for generic fibrations,
  that does not depend on any additional structure on the interval object
  such as connections.
\end{abstract}

\section{Introduction}

Let $\CC$ be a locally cartesian closed category
equipped with a class of morphisms called fibrations.
The \emph{Frobenius property} for $\CC$ says that
if $f : X \to Y$ and $p : Y \to Y'$ are fibrations of $\CC$,
then so is the pushforward $p_* f : X' \to Y'$.
This condition arises when modeling Pi types in intensional type theory,
because a type-in-context $\Gamma \vdash A \, \mathsf{type}$
is interpreted as a fibration $f : A \to \Gamma$.
If the fibrations are part of a suitable model structure on $\CC$,
then the Frobenius property is equivalent to
the condition that the model category $\CC$ is right proper
\citep{GS17}.

The Frobenius property can also serve as
an intermediate step towards establishing
the existence of a model category structure on $\CC$
with the given class of fibrations.
This is part of a broader strategy
of using notions originating in type theory
to construct model category structures,
as explained in \citet{Awo23}.
A particular class of fibrations
often used in this context are
the \emph{generic} (or \emph{unbiased}) fibrations
with respect to a given ``interval'' object $\II$ of $\CC$.
These fibrations can be defined in terms of
a lifting property involving the ``generic point'' $\delta : 1 \to \II$
obtained by passing to the slice category $\CC/\II$,
though here we will find it convenient
to use a more direct description (see \cref{def:gtc}).

The purpose of this note is to give
a simple, diagrammatic proof of the Frobenius property for generic fibrations
that applies in wide generality
(\cref{thm:main}).
In particular, it applies to cartesian cubical sets
and so it can be used to prove Corollary~73 of \citet{Awo23}.
To explain the relationship between this proof
and existing proofs in the literature,
we briefly outline our strategy.

In contexts where
one either already has a model category structure
or is in the process of constructing one,
the fibrations are the right class of a weak factorization system on $\CC$,
whose left class we refer to as trivial cofibrations.
By standard adjunction arguments,
the following two statements are then equivalent:
\begin{enumerate}
\item[(1)]
  The pushforward of a fibration along a fibration is a fibration
  (the Frobenius property).
\item[(2)]
  The pullback of a trivial cofibration along a fibration is a trivial cofibration.
\end{enumerate}
One could therefore either try to prove (1) directly,
or instead try to prove (2).

Statement (1) is closer to the original type-theoretic motivation
(the existence of Pi types).
However, directly proving (1) involves
a lot of reasoning about pushforwards,
which is difficult to fit into
the usual diagrammatic style of category theory
\citep[Section~5]{Awo23}.
Indeed, the original proofs of the Frobenius property
were formulated as explicit type-theoretic constructions,
as in \citep{C14},
\citep{CCHM},
\citep{ABC21}.
\Citet{HR23}, whose main theorem is closely related to ours,
introduces a 2-categorical calculus of pasting diagrams and mates
in order to systematize the required verifications.

Statement (2) appears more amenable
to ordinary category-theoretic methods.
The general approach to proving such a statement is well-known:
reduce to the case of
pulling back a generating trivial cofibration $u$
along a fibration $p$,
and then try to express the pullback $p^* u$
as the retract of another generating trivial cofibration $v$,
via a diagram obtained using the lifting property of~$p$.
However, it is trickier than one might expect
to write down the correct lifting problem and retraction diagram.
When the interval object $\II$
is equipped with extra structure such as connections,
this task becomes a bit easier.
\Citet{GS17} give a diagrammatic proof of statement (2)
in a setting where the interval has connections.
(They use a different definition of fibrations
than the one considered here,
but the two definitions become equivalent in the presence of connections.)
In the category of cartesian cubical sets, however,
the interval object (the $1$-cube) does not have connections,
so a different proof is required.
The contribution of this work is to show that
connections are not required in order to give
a simple diagrammatic proof of statement (2)
for the class of generic fibrations.

\paragraph*{Acknowledgements.}
The author would like to thank Steve Awodey
for discussions related to this work,
and the anonymous referee for attentive comments.
This material is based upon work
supported by the Air Force Office of Scientific Research
under award number FA9550-21-1-0009, PI Steve~Awodey.

\section{Generic fibrations}
\label{sect:generic-fib}

In this section we briefly review the definition of generic fibrations.
Our terminology and notation mostly follows \citet{Awo23}.

For this section and the next one,
we fix a category $\CC$ and a class of morphisms of $\CC$ called \emph{cofibrations},
subject to the following standing hypotheses:
\begin{enumerate}
\item[(H1)]
  $\CC$ has finite limits and finite colimits,
  and for any morphism $f : X' \to X$ of $\CC$,
  the pullback functor $f^* : \CC/X \to \CC/X'$
  preserves finite colimits.
\item[(H2)]
  The cofibrations are closed under pullback.
\item[(H3)]
  Any morphism whose domain is the initial object of $\CC$
  is a cofibration.
%\item[(H4)]
%  The cofibrations are closed under retracts.
\end{enumerate}
For instance, these hypotheses are satisfied
whenever $\CC$ is a finitely cocomplete, locally cartesian closed category
(such as a topos)
and the cofibrations of $\CC$
%are part of a weak factorization system
%(hence satisfy (H4))
%and also
satisfy conditions~(H2) and~(H3).
In particular, they hold
when $\CC$ is the category of cartesian cubical sets
and the cofibrations satisfy the axioms of Definition~9
of \citet{Awo23}.
Note that we do not assume that
every cofibration is a monomorphism.

Next, fix an ``interval'' object $\II$ of $\CC$.
In homotopy theory
we would traditionally ask that $\II$ also be equipped with
``endpoint inclusions'' $\delta_0$, $\delta_1 : 1 \to \II$,
and we would construct generating trivial cofibrations
by forming the pushout product of
a cofibration $c : C \to Z$
with an endpoint inclusion $\delta_\varepsilon : 1 \to \II$,
$\varepsilon = 0$ or $1$.
The result is an ``open box inclusion''
$c \otimes \delta_\varepsilon : Z \amalg_C C \times \II \to Z \times \II$,
which includes either the bottom or the top face of the box
according to whether $\varepsilon$ equals $0$ or $1$.
To define ``generic'' (or ``unbiased'') fibrations, however,
we consider a more general class of open box inclusions
in which, informally, the bottom or top face of the box
is replaced by a ``cross-section'',
the graph of an arbitrary morphism $i : Z \to \II$.

\begin{definition}[\protect{\citet{Awo23}, Definition~36}]
  \label{def:gtc}
  Given a cofibration $c : C \to Z$
  and a morphism $i : Z \to \II$,
  we write
  $c \otimes_i \delta : Z \amalg_C C \times \II \to Z \times \II$
  for the ``cogap map'' of the square below.
  \[
    \begin{tikzcd}[row sep=large, column sep=large]
      C \ar[r, "{\langle 1, ic \rangle}"] \ar[d, "c"'] &
      C \times \II \ar[d, "c \times 1"] \\
      Z \ar[r, "{\langle 1, i \rangle}"] & Z \times \II
    \end{tikzcd}
  \]
  (The symbol $\delta$ is a fixed piece of notation from \citet{Awo23},
  where its meaning is explained.)

  Morphisms of this form $c \otimes_i \delta$
  are called \emph{generating trivial cofibrations}.
  A morphism of $\CC$ is a \emph{fibration}
  if it has the right lifting property
  with respect to all generating trivial cofibrations.
\end{definition}

\begin{figure}
  \centering
  \begin{tikzpicture}
    \draw (0,0) -- (0,2);
    \draw (2,0) -- (2,2);
    \draw (0,0.5) -- (0.5,1) -- (1.2,0.8) -- (2,1.5);
    \draw (1,-0.5) node {$Z \amalg_C C \times \II$};
    \draw[->] (2.5,1) -- (3.5,1);
    \draw[fill=gray!20] (4,0) -- (4,2) -- (6,2) -- (6,0) -- cycle;
    \draw (5,-0.5) node {$Z \times \II$};
  \end{tikzpicture}
  % \vskip 0.3in                  % otherwise the caption overlaps the diagram?

  \caption{A typical generating trivial cofibration
    $c \otimes_i \delta : Z \amalg_C C \times \II \to Z \times \II$.
    Here $c : C \to Z$ is the inclusion of the endpoints of an interval,
    and $i : Z \to \II$ is a general morphism,
    represented here as a ``piecewise linear'' function.}
\end{figure}
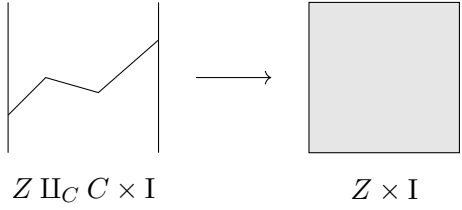

\begin{remark}
  The terms ``generating'' and ``trivial'' notwithstanding,
  we do not assume \emph{a priori}
  that the generating trivial cofibrations
  actually generate a weak factorization system,
  nor that they are related to a model structure on $\CC$.
  Note that the generating trivial cofibrations
  typically form a proper class,
  so that even when $\CC$ is locally presentable,
  we cannot use Quillen's small object argument
  to construct a weak factorization system
  whose right class is the class of fibrations.
\end{remark}

\begin{lemma}[\protect{\citet{Awo23}, Remark~31}]
  \label{thm:graph-gtcof}
  For any object $X$ of $\CC$
  and morphism $i : X \to \II$,
  the graph $\langle 1, i \rangle : X \to X \times \II$
  is isomorphic to a generating trivial cofibration.
\end{lemma}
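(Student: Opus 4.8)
The plan is to realize the graph $\langle 1, i \rangle$ directly as a generating trivial cofibration, by choosing the data in \cref{def:gtc} appropriately. I would take $Z = X$, keep the given morphism $i : X \to \II$, and take $c : C \to Z$ to be the map $0 \to X$ out of the initial object $0$ of $\CC$, which is a cofibration by hypothesis~(H3). It then remains to identify the cogap map $c \otimes_i \delta$ with $\langle 1, i \rangle$.

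The one computation needed is that $0 \times \II \cong 0$. This is where~(H1) enters: pullback along the map $\II \to 1$ gives a functor $\CC \simeq \CC/1 \to \CC/\II$ that preserves finite colimits, in particular the initial object; since this functor carries the initial object of $\CC$ to $(0 \times \II \to \II)$, we get $0 \times \II \cong 0$. Granting this, the square of \cref{def:gtc} with $C = 0$ takes the form
\[
  \begin{tikzcd}[row sep=large, column sep=large]
    0 \ar[r] \ar[d] & 0 \ar[d] \\
    X \ar[r, "{\langle 1, i \rangle}"] & X \times \II,
  \end{tikzcd}
\]
whose pushout is $Z \amalg_C C \times \II \cong X \amalg_0 0 \cong X$, with the coprojection from $Z = X$ an isomorphism. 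Under this isomorphism the induced cogap map is exactly $\langle 1, i \rangle : X \to X \times \II$, so the graph is isomorphic to the generating trivial cofibration $c \otimes_i \delta$, as claimed.

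I anticipate no real obstacle: the statement reduces to a short diagram chase, the point being that taking $c$ to be the inclusion of the initial object collapses the pushout appearing in \cref{def:gtc} down to $X$ itself. The only step demanding any attention is the identity $0 \times \II \cong 0$, which is an instance of~(H1).
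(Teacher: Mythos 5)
Your proof is correct and follows exactly the paper's argument: take $c$ to be the cofibration $0 \to X$ (by (H3)), use (H1) to see that $- \times \II$ preserves the initial object so the pushout collapses to $X$, and identify the cogap map with $\langle 1, i \rangle$. The extra detail you supply about pulling back along $\II \to 1$ is just an unpacking of the same appeal to (H1).
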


\begin{proof}
  By (H1), the functor $- \times \II$
  preserves the initial object $0$ of $\CC$,
  and by (H3), the unique morphism $c : 0 \to X$ is a cofibration.
  Therefore, $\langle 1, i \rangle : X \to X \times \II$
  is isomorphic to the generating trivial cofibration $c \otimes_i \delta$.
\end{proof}

Note that this morphism $\langle 1, i \rangle : X \to X \times \II$
is automatically a monomorphism
(even if not every cofibration is a monomorphism)
since it admits the retraction $\pr_1 : X \times \II \to X$.

\begin{lemma}
  \label{thm:gtc-pb-sq}
  For any cofibration $c : C \to Z$ and morphism $i : Z \to \II$,
  the square appearing in \cref{def:gtc}
  is a pullback square,
  and the morphism $c \times 1 : C \times \II \to Z \times \II$
  is also a cofibration.
\end{lemma}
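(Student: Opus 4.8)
The plan is to deduce both assertions from a single observation: that $c \times 1_\II : C \times \II \to Z \times \II$ is obtained by pulling back $c : C \to Z$ along the projection $\pr_1 : Z \times \II \to Z$. Concretely, the square whose vertices are $C \times \II$, $C$, $Z \times \II$, $Z$, whose horizontal maps are the projections to the first factor, and whose vertical maps are $c \times 1$ and $c$, is a pullback square; this is immediate from the universal property of the product, since a map into $C \times \II$ is the same as a map into $C$ together with a map into $\II$. Granting this, the second claim of the lemma is instant: $c$ is a cofibration by hypothesis, cofibrations are stable under pullback by (H2), and so $c \times 1$ is a cofibration.

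For the first claim, I would paste the square of \cref{def:gtc} on the left of the projection square just described:
\[
  \begin{tikzcd}[row sep=large, column sep=large]
    C \ar[r, "{\langle 1, ic \rangle}"] \ar[d, "c"'] &
    C \times \II \ar[r, "\pr_1"] \ar[d, "c \times 1"] &
    C \ar[d, "c"] \\
    Z \ar[r, "{\langle 1, i \rangle}"] &
    Z \times \II \ar[r, "\pr_1"] &
    Z
  \end{tikzcd}
\]
The composites along the top and bottom rows are $\pr_1 \circ \langle 1, ic \rangle = 1_C$ and $\pr_1 \circ \langle 1, i \rangle = 1_Z$ respectively, so the outer rectangle is simply the (trivially commuting) square exhibiting $C$ as the pullback of $c$ along the identity of $Z$, which is certainly a pullback. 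Since the right-hand square is a pullback, the pullback-pasting lemma then forces the left-hand square — which is exactly the square of \cref{def:gtc} — to be a pullback as well. (All the limits in question exist by (H1).)

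The argument has essentially no obstacle; the one point meriting a line of care is the verification that the two rows of the pasting diagram compose to the identities $1_C$ and $1_Z$, together with the bookkeeping that the left-hand square is literally the square of \cref{def:gtc} and not merely isomorphic to it — both of which follow directly from the defining equations $\pr_1 \circ (c \times 1) = c \circ \pr_1$ and the compatibility of $\langle 1, ic \rangle$ and $\langle 1, i \rangle$ with the projections.
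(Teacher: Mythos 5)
Your proposal is correct and follows essentially the same route as the paper: paste the square of \cref{def:gtc} with the projection square for $\pr_1$, note the horizontal composites are identities, and conclude by pullback cancellation, with (H2) giving that $c \times 1$ is a cofibration. The only cosmetic difference is that the paper justifies the projection square being a pullback by one further cancellation against the product square $Z \times \II \to Z$ over $\II \to 1$, whereas you verify it directly from the universal property of the product.
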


\begin{proof}
  These statements follow from applying
  the pullback cancellation property repeatedly
  in the following diagram,
  whose top-left square is the square in question,
  and using (H2).
  \[
    \begin{tikzcd}[row sep=large, column sep=large, baseline=(end.base)]
      C \ar[r, "{\langle 1, ic \rangle}"] \ar[d, "c"'] &
      C \times \II \ar[r, "\pr_1"] \ar[d, "c \times 1"] &
      C \ar[d, "c"] \\
      Z \ar[r, "{\langle 1, i \rangle}"] &
      Z \times \II \ar[r, "\pr_1"] \ar[d, "\pr_2"] &
      Z \ar[d] \\
      & \II \ar[r] & |[alias=end]| 1
    \end{tikzcd}
    \qedhere
  \]
\end{proof}

\begin{example}
  \label{ex:kan}
  Let $\CC$ be the category of simplicial sets
  with the monomorphisms as its cofibrations
  and $\II = \Delta^1$ as interval object.
  Then the fibrations in the sense of \cref{def:gtc}
  agree with the fibrations of the Kan--Quillen model structure,
  i.e., the usual Kan fibrations of simplicial sets.
  To see this, note that
  any ``open prism inclusion''
  $j_{n,\varepsilon} : \Delta^n \times \{\varepsilon\} \cup \partial \Delta^n \times \Delta^1 \to \Delta^n \times \Delta^1$
  can be obtained as a generating trivial cofibration
  (in the sense of \cref{def:gtc})
  by taking $c$ to be the boundary inclusion
  $c : \partial \Delta^n \to \Delta^n$
  and $i$ to be the constant morphism
  $i : \Delta^n \to \Delta^0 \xrightarrow{\varepsilon} \Delta^1$
  at the vertex of $\Delta^1$ specified by $\varepsilon$.
  It is well-known \citep{GZ67}
  that the morphisms $j_{n,\varepsilon}$ generate the class of anodyne extensions,
  in the sense that the Kan fibrations
  (usually instead defined using horn inclusions)
  are precisely the morphisms of simplicial sets
  that have the right lifting property
  with respect to all of the $j_{n,\varepsilon}$.
  Conversely, any generating trivial cofibration
  $c \otimes_i \delta : Z \amalg_C C \times \Delta^1 \to Z \times \Delta^1$
  is an anodyne extension, i.e.,
  an acyclic cofibration in the Kan--Quillen model structure.
  This follows from left properness
  and the two-out-of-three property,
  since the horizontal maps in the diagram of \cref{def:gtc}
  are one-sided inverses
  to weak equivalences
  $\pr_1 : C \times \Delta^1 \to C$,
  $\pr_1 : Z \times \Delta^1 \to Z$.

  % Taking $c$ to be a boundary inclusion
  % $c : \partial \Delta^n \to \Delta^n$
  % and $i$ a constant morphism
  % $i : \Delta^n \to \Delta^0 \xrightarrow{\varepsilon} \Delta^1$
  % at one of the two vertices of $\Delta^1$,
  % we obtain an ``open prism inclusion''
  % $c \otimes_i \delta :
  % \Delta^n \cup \partial \Delta^n \times \Delta^1 \to \Delta^n \times \Delta^1$
  % as a generating trivial cofibration.
  % It is well-known (REF) that as $n$ and $\varepsilon$ vary,
  % these open prism inclusions
  % generate the usual class of anodyne extensions of simplicial sets.

  % Conversely, if $c : C \to Z$ is any cofibration
  % and $i : Z \to \Delta^1$ any morphism,
  % then the generating trivial cofibration
  % $c \otimes_i \delta : Z \amalg_C C \times \Delta^1 \to Z \times \Delta^1$
  % is an anodyne extension,
  % i.e., an acyclic cofibration in
  % the Kan--Quillen model structure on simplicial sets.
  % This is easy to see making use of the fact that
  % simplicial sets form a model category
  % in which the projection $\pr_1 : X \times \Delta^1 \to X$
  % is a weak equivalence for any object $X$.

  % We deduce that the fibrations of simplicial sets
  % in the sense of \cref{def:gtc}
  % agree with the fibrations in the Kan--Quillen model structure,
  % i.e., the Kan fibrations.
\end{example}

\section{The Frobenius property}

\begin{proposition}
  \label{thm:pullback-gtc}
  The pullback of a generating trivial cofibration
  along a fibration
  is a retract of a generating trivial cofibration.
\end{proposition}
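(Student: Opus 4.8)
The plan is to display $p^*u$ as a retract of a single, explicitly constructed generating trivial cofibration, invoking the lifting property of $p$ exactly once. Fix a generating trivial cofibration $u = c \otimes_i \delta : Z \amalg_C C \times \II \to Z \times \II$ (so $c : C \to Z$ is a cofibration and $i : Z \to \II$) and a fibration $p : W \to Z \times \II$, and write $q = \pr_1 p : W \to Z$ and $j = \pr_2 p : W \to \II$, so that $p = \langle q, j \rangle$. Let $c_W : C \times_Z W \to W$ be the pullback of $c$ along $q$, which is a cofibration by~(H2), and set $v := c_W \otimes_{iq} \delta$, a generating trivial cofibration with codomain $W \times \II$. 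The idea is that pulling $u$ back along $p$ factors as pulling back first along $q \times 1_\II$ (which just produces $v$) and then along the graph $\langle 1, j \rangle : W \to W \times \II$ (against which $p$, being a fibration, lifts); the lift will provide the retraction onto this second pullback.

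The first step is to recognise $v$ as the pullback of $u$ along $q \times 1_\II : W \times \II \to Z \times \II$. This follows from~(H1): the pullback functor $(q \times 1_\II)^* : \CC/(Z \times \II) \to \CC/(W \times \II)$ preserves finite colimits, so it carries the pushout square defining the domain of $u$ to the pushout square defining the domain of $v$, provided one checks that it sends the three corner objects $C$, $C \times \II$, $Z$ (with their structure maps to $Z \times \II$) to $C \times_Z W$, $(C \times_Z W) \times \II$, $W$ (with the evident structure maps to $W \times \II$). This is a direct computation with iterated pullbacks, using the square defining $C \times_Z W$; it also identifies the resulting cogap map with $c_W \otimes_{iq} \delta$.

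The second step is the only use of the fibration hypothesis. Observe the factorisation $p = (q \times 1_\II) \circ \langle 1, j \rangle$, in which $\langle 1, j \rangle : W \to W \times \II$ is a generating trivial cofibration by~\cref{thm:graph-gtcof}. The square with top edge $\mathrm{id}_W$, left edge $\langle 1, j \rangle$, right edge $p$, and bottom edge $q \times 1_\II$ commutes, so since $p$ is a fibration it has a diagonal filler $\lambda : W \times \II \to W$ with $\lambda \circ \langle 1, j \rangle = \mathrm{id}_W$ and $p \circ \lambda = q \times 1_\II$.

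The third step assembles the retract, and I expect this to be mostly mechanical. Since $p = (q \times 1_\II) \circ \langle 1, j \rangle$, the map $p^*u$ is the pullback of $v$ along $\langle 1, j \rangle$; let $A := W \times_{W \times \II} \mathrm{dom}(v)$ be this pullback, so $p^*u$ is the projection $A \to W$ and there is a canonical $\tilde\jmath : A \to \mathrm{dom}(v)$ with $v \tilde\jmath = \langle 1, j \rangle \circ p^*u$. From $v = (q \times 1_\II)^*u$ one also gets a canonical $\mu : \mathrm{dom}(v) \to \mathrm{dom}(u)$ with $u \mu = (q \times 1_\II) \circ v$. Now take the map of arrows $s := (\tilde\jmath, \langle 1, j \rangle) : p^*u \to v$ (it commutes by the pullback square) and the map of arrows $r := (r_A, \lambda) : v \to p^*u$, where $r_A : \mathrm{dom}(v) \to A$ is the map with components $\lambda v$ and $\mu$, well defined because $p \lambda v = (q \times 1_\II) v = u \mu$. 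Then $r \circ s = \mathrm{id}_{p^*u}$: on codomains it is $\lambda \circ \langle 1, j \rangle = \mathrm{id}_W$, and on domains the two components of $r_A \tilde\jmath : A \to A$ are $\lambda v \tilde\jmath = \lambda \langle 1, j \rangle \, p^*u = p^*u$ and $\mu \tilde\jmath$, the latter being the structure map $A \to \mathrm{dom}(u)$ of $p^*u$ by pullback pasting. Hence $p^*u$ is a retract of the generating trivial cofibration $v$. The one genuinely nonobvious point in the whole argument is spotting the factorisation $p = (q \times 1_\II) \circ \langle 1, j \rangle$: writing the fibration $p$ as a graph inclusion followed by a product projection is exactly what lets the lifting property of $p$ manufacture the contraction $\lambda$, which is the single ingredient that powers the retraction.
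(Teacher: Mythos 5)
Your proof is correct, and its essential ingredients coincide with the paper's: you solve exactly the same (single) lifting problem --- lifting $p$ against its own graph $\langle 1, j\rangle$ over the bottom map $q \times 1_\II$, which is where genericity enters --- and your target $v = c_W \otimes_{iq}\delta$ is (up to canonical isomorphism) the paper's $b \otimes_{iz}\delta$, since $C \times_Z W \cong W \times_{Z\times\II} (C\times\II)$. Where you genuinely diverge is in how the retraction is assembled. The paper exhibits the defining pushout square of $p^*u$ as a retract of the defining square of $v$ in the category of commutative squares and then invokes functoriality of the cogap map; this forces it to build two separate retraction diagrams over the common bottom one, using that the relevant map $a$ is a monomorphism (an equalizer of $iz$ and $t$) and pulling the bottom retraction back along $c$. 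You instead observe that $v \cong (q\times 1_\II)^*u$ and that $p = (q\times 1_\II)\circ\langle 1, j\rangle$, so $p^*u \cong \langle 1,j\rangle^* v$, and then write down the retraction directly in the arrow category, defining the map $\mathrm{dom}(v)\to \mathrm{dom}(p^*u)$ by its two components $\lambda v$ and $\mu$ into the pullback $W\times_{Z\times\II}\mathrm{dom}(u)$ and checking the identities by pullback pasting. This buys a leaner verification: (H1) is used once (to identify $(q\times1_\II)^*u$ with the cogap $v$), and the pushout-functoriality, monomorphism/equalizer, and pull-back-the-retraction-along-$c$ steps of the paper disappear, replaced by universal-property bookkeeping for pullbacks. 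The only presentational wrinkle is that you pass silently between the two descriptions of $A$ (as $W\times_{W\times\II}\mathrm{dom}(v)$ and as $W\times_{Z\times\II}\mathrm{dom}(u)$); since these are canonically isomorphic via the pasting you cite, and being a retract is invariant under such isomorphisms, this is harmless, but it would be worth one explicit sentence.
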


\begin{proof}
  A generating trivial cofibration $u$ has the form
  $u = c \otimes_i \delta : D \to Z \times \II$
  for a cofibration $c : C \to Z$ and a morphism $i : Z \to \II$,
  where we write $D$ for $Z \amalg_C C \times \II$.
  Let $p : X \to Z \times \II$ be a fibration,
  and write $p = \langle z, t \rangle$,
  with $z : X \to Z$ and $t : X \to \II$.
  Note that given this data,
  we can construct two (generally different) morphisms from $X$ to $\II$,
  namely $iz$ and $t$.

  In the diagram below,
  the bottom face is the square appearing in \cref{def:gtc}.
  By \cref{thm:gtc-pb-sq}, this square is a pullback.
  We obtain the top square of the diagram
  by pulling back this square along the morphism $p : X \to Z \times \II$,
  producing a cube in which all faces are pullback squares,
  and in particular morphisms $a : X_Z \to X$, $b : X_{C \times \II} \to X$.
  \[
    \begin{tikzcd}[row sep=small, column sep=small]
      X_C \ar[rrr] \ar[rd] \ar[dd] & & &
      X_{C \times \II} \ar[rd, "b"] \ar[dd] \\
      & X_Z \ar[rrr, crossing over, near start, "a"] & & &
      X \ar[dd, "{p = \langle z, t \rangle}"] \\
      C \ar[rrr, near end, "{\langle 1, ic \rangle}"] \ar[rd, "c"'] & & &
      C \times \II \ar[rd, near start, "c \times 1"'] \\
      & Z \ar[rrr, "{\langle 1, i \rangle}"']
      \ar[from=uu, crossing over] & & & Z \times \II
    \end{tikzcd}
    \tag{$*$}
  \]
  Not shown in the above diagram
  is the original generating cofibration
  $u = c \otimes_i \delta : D \to Z \times \II$,
  the cogap map of the bottom face.
  We write $p^* u : X_D \to X$ for its pullback along $p$.
  By (H1), we can identify $X_D$
  with the pushout $X_Z \amalg_{X_C} X_{C \times \II}$
  and $p^* u$ with the cogap map of the top face of ($*$).

  By assumption, $p : X \to Z \times \II$ is a fibration,
  so the square
  \[
    \begin{tikzcd}[row sep=large, column sep=large]
      X \ar[r, equals] \ar[d, "{\langle 1, t \rangle}"'] &
      X \ar[d, "{p = \langle z, t \rangle}"] \\
      X \times \II \ar[r, "z \times 1"] \ar[ru, dashed, "H"] & Z \times \II
    \end{tikzcd}
  \]
  admits a lift $H : X \times \II \to X$, by \cref{thm:graph-gtcof}.
  This produces a retraction diagram
  \[
    \begin{tikzcd}
      X \ar[rr, "{\langle 1, t \rangle}"] & & X \times \II \ar[rr, "H"] & & X
    \end{tikzcd}
    \tag{$\dagger_0$}
  \]
  and we also have the equations
  \[
    zH = z \circ \pr_1 : X \times \II \to Z, \qquad
    tH = \pr_2 : X \times \II \to \II.
  \]

  We are to show that $p^* u$ is
  a retract of a generating trivial cofibration.
  Specifically, we will show that it is a retract of
  the generating trivial cofibration $v = b \otimes_{iz} \delta$.
  (Note that $b : X_{C \times \II} \to X$ is
  a pullback of $c \times 1 : C \times \II \to Z \times \II$,
  hence a cofibration by \cref{thm:gtc-pb-sq} and (H2).)
  We will do this by expressing the top face of ($*$)
  as a retract of the square
  \[
    \begin{tikzcd}[row sep=large, column sep=large]
      X_{C \times \II} \ar[r, "{\langle 1, izb \rangle}"] \ar[d, "b"'] &
      X_{C \times \II} \times \II \ar[d, "b \times 1"] \\
      X \ar[r, "{\langle 1, iz \rangle}"] & X \times \II
    \end{tikzcd}
    \tag{$**$}
  \]
  in the category of commutative squares of $\CC$.
  By functoriality of the pushout,
  it will follow that $p^* u$,
  the cogap map of the top face of ($*$),
  is a retract of $v$,
  the cogap map of ($**$).

  In the lower right corner of this retraction diagram,
  we will use ($\dagger_0$).
  It is then enough to construct two retraction diagrams
  \[
    \begin{tikzcd}[row sep=large, column sep=large]
      X_Z \ar[r, dashed] \ar[d, "a"'] &
      X \ar[r, dashed] \ar[d, "{\langle 1, iz \rangle}"] &
      X_Z \ar[d, "a"] \\
      X \ar[r, "{\langle 1, t \rangle}"] &
      X \times \II \ar[r, "H"] &
      X
    \end{tikzcd}
    \tag{$\dagger_1$}
  \]
  and
  \[
    \begin{tikzcd}[row sep=large, column sep=large]
      X_{C \times \II} \ar[r, dashed] \ar[d, "b"'] &
      X_{C \times \II} \times \II \ar[r, dashed] \ar[d, "b \times 1"] &
      X_{C \times \II} \ar[d, "b"] \\
      X \ar[r, "{\langle 1, t \rangle}"] &
      X \times \II \ar[r, "H"] &
      X
    \end{tikzcd}
    \tag{$\dagger_2$}
  \]
  since both the top face of ($*$) and the square ($**$) are pullback squares.

  To produce diagram ($\dagger_1$),
  note that $a$ is a monomorphism,
  being a pullback of $\langle 1, i \rangle : Z \to Z \times \II$,
  so it is enough to construct dotted morphisms
  making the two squares commute individually.
  Because the front face of ($*$) is a pullback square,
  the diagram
  \[
    \begin{tikzcd}[column sep=large]
      X_Z \ar[r, "a"] & X \ar[r, shift left, "iz"] \ar[r, shift right, "t"'] & \II
    \end{tikzcd}
  \]
  is an equalizer.
  Hence, for the left dotted arrow in ($\dagger_1$)
  we may take the morphism $a : X_Z \to X$,
  while to obtain the right dotted arrow,
  it suffices to show that the compositions
  \[
    \begin{tikzcd}[column sep=large]
      X \ar[r, "{\langle 1, iz \rangle}"] &
      X \times \II \ar[r, "H"] &
      X \ar[r, shift left, "iz"] \ar[r, shift right, "t"'] & \II
    \end{tikzcd}
  \]
  agree.
  We have $izH = iz \circ \pr_1$ while $tH = \pr_2$,
  so both compositions equal $iz$.

  To produce ($\dagger_2$),
  we simply pull back ($\dagger_0$)
  along the morphism $c : C \to Z$.
  \[
    \begin{tikzcd}
      X_{C \times \II} \ar[rr]
      \ar[rrrd, near start, bend right=8]
      \ar[dd, "b"'] & &
      X_{C \times \II} \times \II \ar[rr, near start, crossing over]
      \ar[rd] \ar[dd, crossing over, "b \times 1"'] & &
      X_{C \times \II} \ar[ld] \ar[dd, "b"] \\
      & & & C \ar[dd, near start, "c"] \\
      X \ar[rr, "{\langle 1, t \rangle}"] \ar[rrrd, near start, bend right=8, "z"'] & &
      X \times \II \ar[rr, near start, crossing over, "H"]
      \ar[rd, near start, "z \circ \pr_1"'] & &
      X \ar[ld, "z"] \\
      & & & Z
    \end{tikzcd}
  \]
  The resulting objects and vertical morphisms are the correct ones
  because of the equation $zH = z \circ \pr_1$
  and the pullback squares below,
  in which the middle square is the right face of ($*$).
  \[
    \begin{tikzcd}[row sep=large, column sep=large, baseline=(end.base)]
      X_{C \times \II} \times \II \ar[r, "\pr_1"] \ar[d, "b \times 1"'] &
      X_{C \times \II} \ar[r] \ar[d, "b"'] &
      C \times \II \ar[r, "\pr_1"] \ar[d, "c \times 1"] &
      C \ar[d, "c"] \\
      X \times \II \ar[r, "\pr_1"] &
      X \ar[r, "p"] &
      Z \times \II \ar[r, "\pr_1"] &
      |[alias=end]| Z
    \end{tikzcd}
    \qedhere
  \]
\end{proof}

\begin{remark}
  We give a more informal account
  of the constructions involved in this proof.
  For simplicity, let us assume that
  the cofibration $c : C \to Z$ is a monomorphism
  and that $\CC$ is a topos,
  so that the pushout appearing in the definition of a generating trivial cofibration
  is the union of subobjects.
  We also write as though
  an object $X$ of $\CC$ has actual elements $x : X$.

  The fibration $p : X \to Z \times \II$
  equips each $x : X$ with ``coordinates'' $z(x) : Z$ and $t(x) : \II$.
  Inside $Z \times \II$,
  the original generating trivial cofibration $u : D \to Z \times \II$
  cuts out the subobject consisting of
  those pairs $(z, t)$ such that
  either $t = i(z)$, or $z$ belongs to the subobject $C \subseteq Z$.
  Hence the pullback $p^* u : X_D \to X$
  cuts out those $x : X$
  such that either $t(x) = iz(x)$, or $z(x)$ belongs to $C \subseteq Z$.

  For $x : X$ and $t' : \II$,
  we think of $H(x, t') : X$ as ``transporting'' $x$ to have $t$-coordinate $t'$,
  while leaving its $z$-coordinate unchanged.
  The commutativity of the upper triangle
  in the lifting problem used to construct $H$
  says that if $t(x) = t'$,
  so that the old and new $t$-coordinates are the same,
  then $H(x, t')$ is the original point $x$.
  This is where we use the fact that we work with generic fibrations.

  The cofibration $v$ appearing in the proof
  cuts out those points $(x, t') : X \times \II$
  such that either $t' = iz(x)$, or $z(x)$ belongs to $C \subseteq Z$.
  Call this subobject $E \subseteq X \times \II$.
  We claim that the morphisms
  \[
    \begin{tikzcd}
      X \ar[rr, "{\langle 1, t \rangle}"] & & X \times \II \ar[rr, "H"] & & X
    \end{tikzcd}
  \]
  carry $X_D \subseteq X$ into $E \subseteq X \times \II$ and vice versa.
  For instance, if $(x, t') : X \times \II$ satisfies $t' = iz(x)$, then
  $t(H(x, t')) = t' = iz(x) = iz(H(x, t'))$,
  so $H(x, t') \in X_D$.
  The other cases are similar but easier.
\end{remark}

Deducing the Frobenius property
is now a standard matter of manipulating lifting conditions and adjunctions.
We call a morphism of $\CC$ a \emph{trivial cofibration}
if it has the left lifting property with respect to all fibrations.
Then, for any object $Y$ of $\CC$,
call a morphism $u : A \to B$ of the slice category $\CC/Y$
a (generating) trivial cofibration
whenever its underlying morphism of $\CC$ is one.
Using this terminology, we then observe the following:

\begin{itemize}
\item
  The fibrations of $\CC$ are closed under pullback
  and the trivial cofibrations of $\CC$ are closed under retracts,
  since these classes are defined by lifting properties
  (see e.g.~\cite{Hir19}).
\item
  For a morphism $f : X \to Y$ of $\CC$, the following conditions are equivalent:
  \begin{enumerate}
  \item
    As a morphism of $\CC$, $f$ is a fibration.
  \item
    Viewing $X$ as an object of $\CC/Y$ via~$f$,
    for every generating trivial cofibration $u : A \to B$ of $\CC/Y$,
    the function
    $(- \circ u) : \Hom_{\CC/Y}(B, X) \to \Hom_{\CC/Y}(A, X)$
    is surjective.
  \item
    The same condition as (2), but with the word ``generating'' removed.
  \end{enumerate}

  Indeed, unpacking statements (2) and~(3) shows that
  they say precisely that $f$ has the right lifting property
  with respect to every (generating) trivial cofibration of $\CC$.
\item
  For a fibration $p : Y' \to Y$,
  the pullback functor $p^* : \CC/Y \to \CC/Y'$
  takes generating trivial cofibrations to trivial cofibrations.

  Indeed, suppose $u : A \to B$ is
  a generating trivial cofibration of $\CC/Y$.
  The underlying morphisms of $u$ and $p^* u$ fit in a diagram as shown below,
  in which both squares are pullbacks.
  \[
    \begin{tikzcd}[row sep=large, column sep=large]
      A' \ar[r] \ar[d, "p^* u"'] & A \ar[d, "u"] \\
      B' \ar[r] \ar[d] & B \ar[d] \\
      Y' \ar[r, "p"] & Y
    \end{tikzcd}
  \]
  Above, the morphism $B' \to B$ of $\CC$
  is a pullback of $p$, hence a fibration.
  So by \cref{thm:pullback-gtc},
  the morphism $p^* u$ is a retract of a generating trivial cofibration,
  hence a trivial cofibration.
\end{itemize}

\begin{corollary}
  \label{thm:main}
  Under hypotheses (H1)--(H3) of \cref{sect:generic-fib},
  suppose $f : X \to Y$ and $p : Y \to Y'$ are fibrations
  such that the pushforward $p_* f : X' \to Y'$ exists.
  Then $p_* f$ is also a fibration.
  In particular, if $\CC$ is locally cartesian closed,
  then its fibrations satisfy the Frobenius property.
\end{corollary}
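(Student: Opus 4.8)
The plan is to combine the two bulleted observations immediately preceding the statement by means of the adjunction between pullback and pushforward. Recall that when $p_* f$ exists, its domain — call it $X'$ — comes with a bijection $\Hom_{\CC/Y'}(T, p_* f) \cong \Hom_{\CC/Y}(p^* T, f)$, natural in the object $T$ of $\CC/Y'$, where $p^*$ denotes pullback along $p$. When $\CC$ is locally cartesian closed this is the usual adjunction $p^* \dashv p_*$; in general we only need this universal property, which is precisely what the existence of the pushforward provides. Note also that $p^*$ preserves the pushouts defining generating trivial cofibrations by hypothesis (H1), so that for a generating trivial cofibration $u : A \to B$ of $\CC/Y'$ the pulled-back map $p^* u : p^* A \to p^* B$ is again the cogap map of the pulled-back square.

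First I would invoke the first bullet point to restate the goal: $p_* f$ is a fibration if and only if, for every generating trivial cofibration $u : A \to B$ of $\CC/Y'$ (i.e.\ whose underlying morphism of $\CC$ is one), the precomposition map $(-\circ u) : \Hom_{\CC/Y'}(B, p_* f) \to \Hom_{\CC/Y'}(A, p_* f)$ is surjective, where $X'$ is viewed as an object of $\CC/Y'$ via $p_* f$ itself. Next I would transpose this condition across the adjunction: naturality of the bijection above makes the square
\[
  \begin{tikzcd}[column sep=huge]
    \Hom_{\CC/Y'}(B, X') \ar[r, "{-\circ u}"] \ar[d, "\cong"'] &
    \Hom_{\CC/Y'}(A, X') \ar[d, "\cong"] \\
    \Hom_{\CC/Y}(p^* B, X) \ar[r, "{-\circ p^* u}"] &
    \Hom_{\CC/Y}(p^* A, X)
  \end{tikzcd}
\]
commute, so the top map is surjective exactly when the bottom one is. By the second bullet point, whose proof invokes \cref{thm:pullback-gtc}, the morphism $p^* u$ is a trivial cofibration of $\CC/Y$. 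Since $f : X \to Y$ is a fibration, the first bullet point — in the form with the word ``generating'' deleted — shows that $(-\circ p^* u)$ is surjective for every trivial cofibration of $\CC/Y$. Hence the bottom map is surjective, so the top map is too, and therefore $p_* f$ is a fibration. The final sentence of the corollary is then immediate, because in a locally cartesian closed category the pushforward $p_* f$ always exists.

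I do not expect a real obstacle: the substantive content is entirely contained in \cref{thm:pullback-gtc}, and what remains is a formal manipulation of lifting conditions through an adjunction. The only points requiring care are bookkeeping ones — that ``fibration'' and ``(generating) trivial cofibration'' in a slice category are detected on underlying morphisms of $\CC$, that $p^*$ sends a generating trivial cofibration of $\CC/Y'$ to the cogap map of the corresponding pulled-back square (via (H1)), and that the adjunction bijection is natural enough to carry a solution of one lifting problem to a solution of the transposed one — but all of these are exactly the verifications already recorded in the two bulleted observations, so the corollary follows simply by assembling them.
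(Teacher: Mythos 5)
Your proof is correct and follows essentially the same route as the paper: restate the fibration condition for $p_* f$ via the slice-category characterization, transpose each lifting problem across the natural bijection $\Hom_{\CC/Y'}(-, X') \cong \Hom_{\CC/Y}(p^*(-), X)$, and conclude using the fact (via \cref{thm:pullback-gtc}) that $p^* u$ is a trivial cofibration of $\CC/Y$ against which the fibration $f$ lifts. The explicit naturality square and the remark about (H1) are just elaborations of steps the paper leaves implicit, so there is nothing to add.
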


\begin{proof}
  We regard $X'$ (via $p_* f$)
  as an object of the slice category $\CC/Y'$.
  It comes equipped with an isomorphism
  $\Hom_{\CC/Y}(p^* A, X) \cong \Hom_{\CC/Y'}(A, X')$
  natural in $A \in \CC/Y'$.

  We must show that
  if $u : A \to B$ is any morphism of $\CC/Y'$
  whose underlying morphism in $\CC$ is a generating trivial cofibration,
  then the function
  $(- \circ u) : \Hom_{\CC/Y'}(B, X') \to \Hom_{\CC/Y'}(A, X')$
  is surjective.
  Using the above isomorphism,
  this is equivalent to the statement that the function
  $(- \circ p^* u) : \Hom_{\CC/Y}(p^* B, X) \to \Hom_{\CC/Y}(p^* A, X)$
  is surjective,
  which is true because $p^* u$ is a trivial cofibration of $\CC/Y$.
\end{proof}

By a similar adjunction argument,
we deduce that if $\CC$ is locally cartesian closed
and $p : Y' \to Y$ is a fibration,
then the pullback functor $p^* : \CC/Y \to \CC/Y'$
preserves all trivial cofibrations.
Note that these arguments do not actually require
the existence of trivial cofibration--fibration factorizations,
nor that a general trivial cofibration
can be presented as a retract of a transfinite composition
of pushouts of generating trivial cofibrations.

\begin{example}
  Continuing \cref{ex:kan},
  we see that in the Kan--Quillen model category structure on simplicial sets,
  the pullback of an acyclic cofibration along a fibration
  is again an acyclic cofibration.
  Because the pullback of an acyclic \emph{fibration}
  is always an acyclic fibration,
  we deduce that the pullback of any weak equivalence along a fibration
  is again a weak equivalence, i.e.,
  the model category of simplicial sets is right proper.
  A similar proof is given in \citet{GS17},
  using the fact that the interval object $\Delta^1$
  has connections.
  We have shown that the connections
  are not really needed for such an argument.
\end{example}

\begin{remark}
  Suppose the interval object $\II$ is equipped with a chosen point $p : 1 \to \II$.
  Then we may define a different class of fibrations,
  the \emph{$p$-biased fibrations},
  as those with the right lifting property
  with respect to the pushout products
  $c \otimes p : Z \amalg_C C \times \II \to Z \times \II$
  of all cofibrations $c : C \to Z$ of $\CC$ with the fixed morphism $p$.
  In general, the $p$-biased fibrations
  need not have the Frobenius property,
  i.e., the analogue of \cref{thm:main} for $p$-biased fibrations does not hold.

  Specifically, take $\CC$ to be the category of simplicial sets
  with all monomorphisms as cofibrations,
  $\II$ to be $\Delta^1$,
  and $p : 1 \to \Delta^1$ to be the morphism selecting the $0$th vertex.
  Then the $p$-biased fibrations are the \emph{left fibrations} of \citet{J},
  by Proposition~2.1.2.6 of \citet{HTT}.
  We claim the left fibrations do not satisfy the Frobenius property.
  By adjunction, this is equivalent to the claim that
  the morphisms with the left lifting property with respect to left fibrations,
  namely the left anodyne extensions,
  are not stable under pullback along left fibrations.
  For example, the morphism $p : 1 \to \Delta^1$
  is itself a left anodyne extension,
  while the inclusion $q : 1 \to \Delta^1$ of the other vertex
  is a left fibration.
  (This can be checked directly,
  or by using Proposition~2.1.1.3 of \textit{op.cit.})
  The pullback $q^* p : 1 \times_{\Delta^1} 1 \to 1$
  has empty domain,
  so it is not a left anodyne extension,
  because left anodyne extensions are in particular weak equivalences.

  The correct statement in this situation is that
  the pushforward of a \emph{right} fibration
  along a \emph{left} fibration
  is again a \emph{right} fibration,
  and vice versa.
  See section~21 of \citet{J}
  or section~4.1.2 of \citet{HTT}.
\end{remark}

\goodbreak
\renewcommand*{\bibfont}{\small}
\bibliography{frob}

\end{document}